\definecolor{webgreen}{rgb}{0,.5,0}
\definecolor{webbrown}{rgb}{.6,0,0}
\title{Method for solving an iterative functional equation $A^{2^n}(x)=F(x)$}
\author{D. V. Kruchinin, V.~V.~Kruchinin\\
\small Tomsk State University of Control Systems and Radioelectronics, Russian Federation\\
\small \texttt{kruchininDm@gmail.com}\\
}
\begin{document}
\maketitle

\begin{abstract}
Using the notion of the composita, we obtain a method of solving iterative functional equations of the form $A^{2^n}(x)=F(x)$, where $F(x)=\sum_{n>0} f(n)x^n$, $f(1)\neq 0$. 
We prove that if $F(x)=\sum_{n>0} f(n)x^n$ has integer coefficients, then the generating function $A(x)=\sum_{n>0}a(n)x^n$, which is obtained from the iterative functional equation $4A(A(x))=F(4x)$, has integer coefficients.

Key words: iterative functional equation, composition of generating functions, composita.
\end{abstract}

\theoremstyle{plain}
\newtheorem{theorem}{Theorem}
\newtheorem{Corollary}[theorem]{Corollary}
\newtheorem{lemma}[theorem]{Lemma}
\newtheorem{proposition}[theorem]{Proposition}

\theoremstyle{definition}
\newtheorem{definition}[theorem]{Definition}
\newtheorem{example}[theorem]{Example}
\newtheorem{conjecture}[theorem]{Conjecture}
\theoremstyle{remark}
\newtheorem{remark}[theorem]{Remark}

\newtheorem{Theorem}{Theorem}[section]
\newtheorem{Proposition}[Theorem]{Proposition}

\theoremstyle{definition}
\newtheorem{Example}[Theorem]{Example}
\newtheorem{Remark}[Theorem]{Remark}
\newtheorem{Problem}[Theorem]{Problem}
\newtheorem{state}[Theorem]{Statement}
\makeatletter
\def\rdots{\mathinner{\mkern1mu\raise\p@
\vbox{\kern7\p@\hbox{.}}\mkern2mu
\raise4\p@\hbox{.}\mkern2mu\raise7\p@\hbox{.}\mkern1mu}}

\newcommand{\seqnum}[1]{\href{http://oeis.org/#1}{\underline{#1}}}

\newcommand{\seqnumr}[1]{\begin{flushright}{\href{http://oeis.org/#1}{\underline{#1}}}\end{flushright}}

\makeatother

\section{Introduction}
For a given $A(x)$ the $n$-{th} iteration is the function that is composed
with itself $n$ times.
$$
A^0(x)=x, \qquad A^1(x)=A(x), \qquad A^2(x)=A(A^1(x)), \ldots, A^n(x)=A(A^{n-1}(x))
$$
and denoted by $A^n(x)$.

By an iterative functional equation  we mean the equation, where the analytical form of a function $A(x)$ is not known, but its
composition with itself is known.

Iterative functional equations  arise in various fields such as fractal theory, computer science, dynamical systems, and maps.
In particular, a challenging iterated functional equation $A(A(x))=x^2-2$ circulate as a test of one's intelligence.

Despite their prevalence, they are very difficult to solve, and few mathematical tools exist to analyse them.
In the general case, solving the equations involve deep mathematical insight
and experimentation with different substitutions and reformulations.

In the literature you can find extensive investigations concerning iterates and functional equations, classical books \cite{Aczel,Kuczma_1968,Kuczma_1990,Nechepurenko} and recent surveys and papers \cite{BaronJar,Kobza,BBS,Berg,Liu}.

In this paper, we propose using the notion of the \textit{composita} to identify and solve such iterative functional equations, where $F(x)=\sum_{n>0} f(n)x^n$, $f(1)\neq 0$.

\section{Preliminary}

In the paper \cite{KruCompositae}, the authors introduced the notion of the \textit{composita} of a given ordinary generating function $F(x)=\sum_{n>0}f(n)x^n$.

\begin{definition} 
The composita of the generating function $F(x)=\sum_{n>0}f(n)x^n$ is the function of two variables
\begin{equation}
\label{Fnk0}F^{\Delta}(n,k)=\sum_{\pi_k \in C_n}{f(\lambda_1)f(\lambda_2)\ldots f(\lambda_k)},
\end{equation}
where $C_n$ is a set of all compositions of an integer $n$, $\pi_k$ is the composition $\sum_{i=1}^k\lambda_i=n$ into $k$ parts exactly.
\end{definition}

The generating function of the composita of $F(x)$ is equal to
\begin{equation}
\label{GenComp}
[F(x)]^k=\sum_{n\geq k}F^{\Delta}(n,k)x^n.
\end{equation} 

The composita is the coefficients of the powers of an ordinary generating function
$$F^{\Delta}(n,k) := [z^{n}] F(x)^k.$$

For the case $k=n$ the composita $F^{\Delta}(n,k)$ (see the formula (\ref{Fnk0})  is
\begin{equation}\label{formula_f1n}
F^{\Delta}(n,n)=f(1)^n.
\end{equation}

In tabular form, the composita is presented as a triangle as follows
$$
\begin{array}{ccccccccccc}
&&&&& f(1)\\
&&&& f(2) && f^2(1)\\
&&& f(3) && F_{3,2}^{\Delta} && f^3(1)\\
&& f(4) && F_{4,2}^{\Delta} && F_{4,3}^{\Delta} && f^4(1)\\
& \rdots && \vdots && \vdots && \vdots && \ddots\\
f(n) && F_{n,2}^{\Delta} && \ldots && \ldots && F_{n,n-1}^{\Delta} && f^n(1)\\
\end{array}
$$ 

Comtet\cite[ p.\ 141]{Comtet_1974} considered similar objects and identities for exponential generating functions, and called them potential polynomials. In this paper we consider the more general case of generating functions ordinary generating functions.
For more information about the composita you can see \cite{KruCompositae,KruchininTriangle2012,KruApplication}.

In the following theorem we give the formula for the composita of a composition of generating function, which was proved by the authors in \cite{KruCompositae}.

\begin{theorem}\label{CompozitProduct}  
Suppose we have the generating functions $A(x)=\sum_{n>0}a(n)x^n$, $G(x)=\sum_{n>0} g(n)x^n$, and their compositae $A^{\Delta}(n,k)$, $G^{\Delta}(n,k)$ respectively.
Then for the composition of generating functions $F(x)=G\left(A(x)\right)$  the composita is equal to 
\begin{equation}
\label{compCompositon}
F^{\Delta}(n,k)=\sum_{m=k}^n A^{\Delta}(n,m)G^{\Delta}(m,k).
\end{equation}
\end{theorem}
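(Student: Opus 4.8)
The plan is to prove the identity purely by manipulating formal power series, relying on the characterization (\ref{GenComp}) of the composita as the coefficient array of the powers of a generating function, i.e. $F^{\Delta}(n,k)=[x^n]F(x)^k$, and on the observation that all sums occurring below are finite because a series with zero constant term raised to the $m$-th power has no terms of degree less than $m$. Concretely, I would first record that both $A(x)$ and $G(x)$ have zero constant term, so $F(x)=G(A(x))$ is a well-defined formal power series, and that $A^{\Delta}(n,m)=0$ for $m>n$ while $G^{\Delta}(m,k)=0$ for $m<k$ by (\ref{Fnk0})--(\ref{GenComp}); this triangular support makes the double sum $\sum_{m}A^{\Delta}(n,m)G^{\Delta}(m,k)$ effectively range over $k\le m\le n$ and guarantees that the rearrangement of series performed below is legitimate.

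The core of the argument is a two-step expansion. Applying (\ref{GenComp}) to $G$ gives $G(x)^k=\sum_{m\ge k}G^{\Delta}(m,k)x^m$; substituting $x\mapsto A(x)$, which is permissible since $A(0)=0$, yields
\[
F(x)^k=\bigl(G(A(x))\bigr)^k=\sum_{m\ge k}G^{\Delta}(m,k)\,A(x)^m .
\]
Now expand each power of $A$ via (\ref{GenComp}) applied to $A$, namely $A(x)^m=\sum_{n\ge m}A^{\Delta}(n,m)x^n$, and interchange the order of summation:
\[
F(x)^k=\sum_{m\ge k}G^{\Delta}(m,k)\sum_{n\ge m}A^{\Delta}(n,m)\,x^n
=\sum_{n\ge k}\Bigl(\sum_{m=k}^{n}A^{\Delta}(n,m)\,G^{\Delta}(m,k)\Bigr)x^n .
\]
Comparing the coefficient of $x^n$ on both sides with the defining relation (\ref{GenComp}) for $F$ gives $F^{\Delta}(n,k)=\sum_{m=k}^{n}A^{\Delta}(n,m)G^{\Delta}(m,k)$, which is exactly (\ref{compCompositon}).

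I do not anticipate a real obstacle: the only point needing care is the interchange of the two summations, and this is immediate from the triangular support noted above, since for a fixed exponent $n$ only the finitely many indices $m$ with $k\le m\le n$ contribute. An alternative, more combinatorial route would start from the definition (\ref{Fnk0}): one would first read off from $F(x)=\sum_{m}g(m)A(x)^m$ the relation $f(j)=\sum_{m=1}^{j}g(m)A^{\Delta}(j,m)$ (the case $k=1$ of the claim, using $G^{\Delta}(m,1)=g(m)$), substitute it into $\sum_{\pi_k\in C_n}f(\lambda_1)\cdots f(\lambda_k)$, and group the resulting products according to the total number $m$ of parts produced after refining each $\lambda_i$ through $A$. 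Since this bookkeeping is precisely what the generating-function computation carries out automatically, I would present the formal-power-series proof above.
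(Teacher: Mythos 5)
Your proof is correct. The paper itself does not prove Theorem~\ref{CompozitProduct}; it only cites \cite{KruCompositae} for it, and your formal-power-series argument --- expanding $F(x)^k=\bigl(G(A(x))\bigr)^k$ first through the composita of $G$ and then through the powers of $A$, with the interchange of summation justified by the triangular support $A^{\Delta}(n,m)=0$ for $m>n$ and $G^{\Delta}(m,k)=0$ for $m<k$ --- is the standard and complete derivation of identity (\ref{compCompositon}).
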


\section{Iterative functional equation  $A(A(x))=F(x)$}

The simplest case of an iterative functional equation for an unknown function $A(x)$ with given
function $F(x)$
$$
A(A(x))=F(x)
$$
appeared in 1820 (Babbage, with $F(x)=x$).

In this section we consider the case, when the expression for coefficients of a generating function $A(x)=\sum_{n>0}a(n)x^n$ is not known, but its composition with itself is known.

\begin{theorem}\label{TheoremItaration2}
Suppose $F(x)=\sum_{n>0} f(n)x^n$ is a generating function, where $f(1)\neq 0$, $F^{\Delta}(n,k)$ is the composita of $F(x)$, $A(x)=\sum_{n>0}a(n)x^n$ is the generating function, which is obtained from the functional equation $A(A(x))=F(x)$. Then for the composita of the generating function $A(x)$ we have the following recurrent formula
\begin{equation}\label{FormulaIteration}
A^{\Delta}(n,k)=\left\{
\begin{array}{ll}
{f(1)}^{\frac{n}{2}}, & n=k;\\
\frac{1}{{f(1)}^{n\over 2}+{f(1)}^{k\over 2}}\left(F^{\Delta}(n,k)-\sum\limits_{m=k+1}^{n-1} A^{\Delta}(n,m)A^{\Delta}(m,k)\right),& n>k.\\
\end{array}
\right.
\end{equation}
\end{theorem}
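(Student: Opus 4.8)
The plan is to apply Theorem~\ref{CompozitProduct} directly with $G(x)=A(x)$, so that the relation $A(A(x))=F(x)$ exhibits $F$ as a composition whose composita is controlled by \eqref{compCompositon}. First I would record the linear-coefficient identity: comparing coefficients of $x$ on both sides of $A(A(x))=F(x)$ gives $a(1)^2=f(1)$, hence $a(1)=f(1)^{1/2}$, and then formula \eqref{formula_f1n} applied to $A(x)$ yields $A^{\Delta}(n,n)=a(1)^n=f(1)^{n/2}$, which is exactly the first case of \eqref{FormulaIteration}.

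For the case $n>k$, I would start from \eqref{compCompositon} with $G=A$, namely $F^{\Delta}(n,k)=\sum_{m=k}^{n}A^{\Delta}(n,m)\,A^{\Delta}(m,k)$, and peel off the two extreme summands $m=n$ and $m=k$. Using $A^{\Delta}(n,n)=f(1)^{n/2}$ and $A^{\Delta}(k,k)=f(1)^{k/2}$, these two terms combine into $\bigl(f(1)^{n/2}+f(1)^{k/2}\bigr)A^{\Delta}(n,k)$, leaving the interior sum $\sum_{m=k+1}^{n-1}A^{\Delta}(n,m)\,A^{\Delta}(m,k)$. Solving the resulting linear equation for $A^{\Delta}(n,k)$ produces precisely the second case of \eqref{FormulaIteration}.

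It then remains to check that this is a genuine recurrence, i.e.\ that the right-hand side involves only already-determined values of $A^{\Delta}$. One orders the computation by the first argument $n$ and, for fixed $n$, by decreasing $k$: the factors $A^{\Delta}(n,m)$ on the right have $m>k$ (same row, computed earlier), and the factors $A^{\Delta}(m,k)$ have $m<n$ (an earlier row), so an induction on $n$ closes the argument, with base case $A^{\Delta}(1,1)=f(1)^{1/2}$.

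The one genuine subtlety — the step I would be most careful about — is the division by $f(1)^{n/2}+f(1)^{k/2}$: one must argue this denominator never vanishes. Factoring it as $f(1)^{k/2}\bigl(1+f(1)^{(n-k)/2}\bigr)$ and using $f(1)\neq 0$ reduces the issue to $f(1)^{(n-k)/2}\neq -1$, which holds in the intended setting (for instance when $f(1)$ is a positive real, or more generally whenever no such root of $-1$ arises); this is also where one fixes a consistent branch of the square root $f(1)^{1/2}$. Everything else is the routine bookkeeping of isolating the boundary terms of the convolution identity \eqref{compCompositon}.
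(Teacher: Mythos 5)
Your proposal follows essentially the same route as the paper: apply Theorem~\ref{CompozitProduct} with $G=A$, use \eqref{formula_f1n} to get $A^{\Delta}(n,n)=f(1)^{n/2}$, peel off the $m=k$ and $m=n$ boundary terms of the convolution to isolate $\bigl(f(1)^{n/2}+f(1)^{k/2}\bigr)A^{\Delta}(n,k)$, and then verify computability by induction on $n$ with decreasing $k$. Your explicit attention to the non-vanishing of the denominator and the choice of branch for $f(1)^{1/2}$ is a point the paper's proof passes over silently, and is a worthwhile addition.
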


\begin{proof}
Using Theorem \ref{CompozitProduct}, for the case $n=k$ we get
$$
 A^{\Delta}(n,n)A^{\Delta}(n,n)=F^{\Delta}(n,n).
$$
According to the formula (\ref{formula_f1n}), we obtain
$$
A^{\Delta}(n,n)=f(1)^{\frac{n}{2}}.
$$

Using Theorem \ref{CompozitProduct}, for the case $n>k$  we get
$$
\sum_{m=k}^n A^{\Delta}(n,m)A^{\Delta}(m,k)=F^{\Delta}(n,k)
$$
or
$$
A^{\Delta}(n,k)A^{\Delta}(k,k)+\sum_{m=k+1}^{n-1}A^{\Delta}(n,m)A^{\Delta}(m,k)+A^{\Delta}(n,n)A^{\Delta}(n,k)=F^{\Delta}(n,k).
$$

Therefore, we have
$$
A^{\Delta}(n,k)\left(A^{\Delta}(k,k)+A^{\Delta}(n,n)\right)=F^{\Delta}(n,k)-\sum_{m=k+1}^{n-1}A^{\Delta}(n,m)A^{\Delta}(m,k).
$$
Since $A^{\Delta}(n,n)=a(1)^n=f(1)^{\frac{n}{2}}$, we obtain the desired formula
$$
A^{\Delta}(n,k)=\left\{
\begin{array}{ll}
{f(1)}^{\frac{n}{2}}, & n=k;\\
\frac{1}{{f(1)}^{n\over 2}+{f(1)}^{k\over 2}}\left(F^{\Delta}(n,k)-\sum\limits_{m=k+1}^{n-1} A^{\Delta}(n,m)A^{\Delta}(m,k)\right),& n>k.\\
\end{array}
\right.
$$

Next we show that the formula (\ref{FormulaIteration}) is computable.
For the case $n=1$ the formula is computable
$$
A^{\Delta}(1,1)=\sqrt{f(1)}.
$$
By induction, we put that there exist $A^{\Delta}(k,l)$ for $k,l\leqslant n$.

Then we prove that  $A^{\Delta}(n+1,m)$ is computable for $m\leqslant n+1$.
\begin{enumerate}
\item First case $m=n+1$. From Theorem \ref{CompozitProduct}, we have 
$$A^{\Delta}(n+1,n+1)A^{\Delta}(n+1,n+1)=F^{\Delta}(n+1,n+1).$$
Then $A^{\Delta}(n+1,n+1)$ is computable.

\item Next case $m=n$. From Theorem \ref{CompozitProduct}, we have 
$$A^{\Delta}(n+1,n)A^{\Delta}(n,n)+A^{\Delta}(n+1,n+1)A^{\Delta}(n+1,n)=F^{\Delta}(n+1,n).$$
Since $A^{\Delta}(n,n)$ and $A^{\Delta}(n+1,n+1)$ are known, $A^{\Delta}(n+1,n)$ is computable.

\item The case $m=n-1$. From Theorem \ref{CompozitProduct}, we have 
$$A^{\Delta}(n+1,n-1)A^{\Delta}(n+1,n+1)+A^{\Delta}(n,n-1)A^{\Delta}(n+1,n)+$$
$$+A^{\Delta}(n-1,n-1)A^{\Delta}(n+1,n-1)=F^{\Delta}(n+1,n).$$
Since $A^{\Delta}(n+1,n+1)$,$A^{\Delta}(n+1,n)$ are obtained previously, and other are known by the condition, $A^{\Delta}(n+1,n-1)$ is computable.

Therefore, by induction we can obtain all values of  $A^{\Delta}(n+1,m)$ for $m\leqslant n+1$.
\end{enumerate} 
\end{proof}

For applications of Theorem~\ref{TheoremItaration2} we give some examples.
\begin{example} Let us consider the iterative functional equation $A(A(x))=\sin(x)$ (for details, see \seqnum{A048602}, \seqnum{A048603} in \cite{oeis}).
According to \cite{KruCompositae}, the composita of $\sin(x)$ is equal to
$$
\frac{1+(-1)^{n-k}}{2^{k}n!}\sum_{m=0}^{\lfloor \frac{k}{2} \rfloor} {k \choose m} (2m-k)^n(-1)^{\frac{n+k}{2}-m}.
$$
Using Theorem \ref{TheoremItaration2}, we obtain following recurrent formula
$$ A^{\Delta}(n,k)=\left\{
\begin{array}{ll}
1, & n=k;\\
\frac{1}{2}\left({{\frac{1+(-1)^{n-k}}{2^{k}n!}\sum\limits_{m=0}^{\lfloor \frac{k}{2} \rfloor} {k \choose m} (2m-k)^n(-1)^{\frac{n+k}{2}-m}-\sum\limits_{m=k+1}^{n-1}A^{\Delta}(n,m)A^{\Delta}(m,k)}}\right), & n>k.\\
\end{array}
\right.
$$ 

Therefore, using the formula (\ref{GenComp}), the expression for coefficients of $A(x)=\sum_{n>0}a(n)x^n$ is
$$ a(n)=A^{\Delta}(n,1).
$$ 
\end{example}

\begin{example}
 Let us consider the iterative functional equation $A(A(x))=e^x-1$ (for details, see \seqnum{A052104}, \seqnum{A052122}, and \seqnum{A052105} in \cite{oeis}).
Stanley studied the problem of finding an expression for the coefficients of $A(x)$, however the problem remains unsolved \cite[ ex.\ 5.52.c]{Stanley_v2}.

Next, we obtain the composita of the exponential generating function \cite{KruCompositae}
$$
F(x)=e^x-1.
$$

Raising this generating function to the power of $k$ and applying the binomial theorem, we obtain 
$$
\left(e^x-1\right)^k=\sum_{j=0}^k {k \choose j}e^{xj}(-1)^{k-j}.
$$
Since 
$$
\left(e^{x} \right)^k=\sum_{n\geq 0} \frac{k^n}{n!}x^n,
$$
we get
$$
F^{\Delta}(n,k)=\sum_{j=0}^k {k \choose j}\frac{j^n}{n!}(-1)^{k-j}.
$$
or
since the general formula for the Stirling numbers of the second kind is given as follows:
$$
\genfrac{\{}{\}}{0pt}{}{n}{k}=\frac{1}{k!}\sum_{j=0}^k(-1)^{k-j}\binom{k}{j}j^n,
$$
we have
\begin{equation}
\label{exp(x)-1}
F^{\Delta}(n,k)=\frac{k!}{n!}\genfrac{\{}{\}}{0pt}{}{n}{k}.
\end{equation}

Here 
$
\genfrac{\{}{\}}{0pt}{}{n}{k}
$
stand for the Stirling numbers  of the second kind(see \cite{Comtet_1974,ConcreteMath}).
The Stirling numbers of the second kind 
$
\genfrac{\{}{\}}{0pt}{}{n}{k}=S(n,k)
$
count the number of ways to partition a set of $n$ elements into $k$ nonempty subsets.

Using Theorem \ref{TheoremItaration2}, we obtain following recurrent formula
$$ A^{\Delta}(n,k)=\left\{
\begin{array}{ll}
1, & n=m,\\
\frac{1}{2}\left({{{{n!}\over{n!}}\,\genfrac{\{}{\}}{0pt}{}{n}{k}-\sum\limits_{m=k+1}^{n-1} A^{\Delta}(n,m)A^{\Delta}(m,k)}}\right), & n>k.\\
 \end{array}
 \right.
$$ 

Therefore, using the formula (\ref{GenComp}), the expression for coefficients of $A(x)=\sum_{n>0}a(n)\frac{x^n}{n!}$ is
$$ a(n)=n!\,A^{\Delta}(n,1).
$$ 
\end{example}

Other examples of  solutions of iterative functional equations  written in the On-line Encyclopedia of Integer Sequences (see Table  \ref{IterationTable}).

\begin{center}

\begin{table}\caption{Iterative functional equations  and their solutions in OEIS}\label{IterationTable}
\begin{center}
\begin{tabular}{|l|l|} \hline
N of sequence & Equation \\ \hline
\seqnum{A030274}   &    $A(A(x)) = \frac{x}{(1-x)^2}$\\ \hline
\seqnum{A097088}   &    $A(A(x)) = x+x^2$\\ \hline
\seqnum{A097090}   &	$A(A(x)) = x(1+2x)^2$\\ \hline
\seqnum{A048607}   &   $A(A(x)) = \ln(1+x)$\\ \hline
\seqnum{A072350}   &   $A(A(x)) = \tan(x)$\\ \hline
\seqnum{A199822}   &   $A(A(x))=\frac{1-4x-\sqrt{1-8x}}{8x}$\\ \hline
\seqnum{A199823}   &   $A(A(x)) = \frac{x+2x^2}{1-2x-4x^2}$\\ \hline
\seqnum{A199852}   &   $A(A(x))=x\exp(2x)$\\ \hline
\seqnum{A199917}   &   $A(A(x)) = \frac{1}{x}(2-2\cos(x))$\\ \hline
\end{tabular}
\end{center}
\end{table}
\end{center}

\section{Integer properties of an iterative functional equation  $A(A(x))=F(x)$}

In this section we study an iterative functional equation  $A(A(x))=F(x)$, where $F(x)=\sum_{n>0} f(n)x^n$ is a generating function with integer coefficients.

First for case $f(1)=1$, the formula (\ref{FormulaIteration}) has form
\begin{equation}\label{FormulaIteration1}
A^{\Delta}(n,k)=\left\{
\begin{array}{ll}
1, & n=k;\\
\frac{1}{2}\left(F^{\Delta}(n,k)-\sum\limits_{m=k+1}^{n-1} A^{\Delta}(n,m)A^{\Delta}(m,k)\right),& n>k.\\
\end{array}
\right.
\end{equation}

According to the above formula (\ref{FormulaIteration1}), we present the composita of $A(x)$ in tabular form

\begin{equation}\label{trian1}
\begin{array}{llll}
1\\
\frac{F_{2,1}^{\Delta}}{2}&1 \\
-\frac{(F_{2,1}^{\Delta}F_{3,2}^{\Delta}-4F_{3,1}^{\Delta})}{8}&\frac{F_{3,2}^{\Delta}}{2}&1\\
\frac{((F_{2,1}^{\Delta}F_{3,2}^{\Delta}-2F_{3,1}^{\Delta})F_{4,3}^{\Delta}-2F_{2,1}^{\Delta}F_{4,2}^{\Delta}+8F_{4,1}^{\Delta})}{16}&\frac{-(F_{3,2}^{\Delta}F_{4,3}^{\Delta}-4F_{4,2}^{\Delta})}{8}&\frac{F_{4,3}^{\Delta}}{2}&1
\end{array}
\end{equation}

Considering Triangle \ref{trian1}, we can see that not all expressions $2^{n-k}A^{\Delta}(n,k)$ is integer (for example for $A^{\Delta}(3,1)$).
Now we give the following theorem.
\begin{theorem}
\label{integer}
Suppose $F(x)=\sum_{n>0} f(n)x^n$ is a generating function with integer coefficients such that $f(1)=1$, $F^{\Delta}(n,k)$ is the composita of $F(x)$, $A(x)=\sum_{n>0}a(n)x^n$ is the generating function, which is obtained from the functional equation $A(A(x))=F(x)$. Then for the composita of the generating function $A(x)$ the expression
$$4^{n-k}A^{\Delta}(n,k)$$
is integer.
\end{theorem}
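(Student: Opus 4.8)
\medskip
\noindent\textbf{Proof plan.}
The plan is to track, in place of $A^{\Delta}(n,k)$ itself, the rescaled quantity $B(n,k):=4^{n-k}A^{\Delta}(n,k)$, and to show that $B(n,n)=1$ while $B(n,k)$ is an \emph{even} integer whenever $n>k$; this immediately yields that $4^{n-k}A^{\Delta}(n,k)=B(n,k)$ is an integer in every case. The rescaling is the natural one: setting $\widetilde A(x):=\tfrac14 A(4x)$ one checks that $\widetilde A(\widetilde A(x))=\tfrac14 F(4x)$ and $B(n,k)=\widetilde A^{\Delta}(n,k)$, so that $B$ is the composita of the solution of an iterative equation of the very same form, whose right-hand side $\tfrac14 F(4x)=\sum_{n>0}f(n)4^{n-1}x^n$ again has integer coefficients and first coefficient $1$. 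In practice, all that is needed is to multiply the recurrence (\ref{FormulaIteration1}) through by $4^{n-k}$.

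First I would observe that $F^{\Delta}(n,k)=[x^n]F(x)^k$ is an integer, since $F(x)$ has integer coefficients. Then, multiplying (\ref{FormulaIteration1}) by $4^{n-k}$ and using
$$4^{n-k}A^{\Delta}(n,m)A^{\Delta}(m,k)=\bigl(4^{n-m}A^{\Delta}(n,m)\bigr)\bigl(4^{m-k}A^{\Delta}(m,k)\bigr)=B(n,m)B(m,k)\qquad(k\le m\le n),$$
I obtain the recurrence
$$B(n,k)=\begin{cases}1,& n=k,\\[1mm] \dfrac{1}{2}\left(4^{n-k}F^{\Delta}(n,k)-\sum_{m=k+1}^{n-1}B(n,m)B(m,k)\right),& n>k.\end{cases}$$
It then remains to establish the claim above about $B$.

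I would prove the claim by strong induction on $d=n-k$. For $d=1$ the inner sum is empty, so $B(k+1,k)=2F^{\Delta}(k+1,k)$ is an even integer. For $d\ge2$, every $m$ with $k<m<n$ satisfies $1\le n-m\le d-1$ and $1\le m-k\le d-1$; hence by the induction hypothesis $B(n,m)$ and $B(m,k)$ are even integers, so each product $B(n,m)B(m,k)$ — and therefore the whole sum — is divisible by $4$. Since $n-k\ge1$, the term $4^{n-k}F^{\Delta}(n,k)$ is divisible by $4$ as well. Thus the expression in parentheses is divisible by $4$, and $B(n,k)$ is an even integer, which closes the induction.

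The delicate point, and the one I expect to be the crux, is that plain integrality is \emph{not} propagated by this recurrence: the division by $2$ would fail if the cross terms were known only to be integers. One must strengthen the induction hypothesis so that the off-diagonal entries carry a factor of $2$; only then is each product $B(n,m)B(m,k)$ divisible by $4$ and hence, together with the contribution $4^{n-k}F^{\Delta}(n,k)$, compatible with dividing by $2$. This is also exactly why the normalising factor must be $4^{n-k}$ and not $2^{n-k}$, in agreement with the remark following Triangle (\ref{trian1}).
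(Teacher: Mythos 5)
Your proposal is correct and follows essentially the same route as the paper: multiply the recurrence (\ref{FormulaIteration1}) by $4^{n-k}$ and strengthen the induction hypothesis to \emph{evenness} of the off-diagonal rescaled entries, so that each product $B(n,m)B(m,k)$ and the term $4^{n-k}F^{\Delta}(n,k)$ are divisible by $4$ and the division by $2$ preserves evenness. Your single strong induction on $d=n-k$ is a tidier bookkeeping of the paper's nested induction (on the row, then downward on $m$), but the key idea is identical.
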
  

\begin{proof} 
To prove the theorem we use the formula (\ref{FormulaIteration1}).

For the case $n<5$ the expression
$4^{n-k}A^{\Delta}(n,k)$
is integer for all $k\leqslant n$ (see Triangle \ref{trian1}).

By induction, we put that $4^{n-k}A^{\Delta}(k,l)$ is integer for $k,l\leqslant n$.

Then we prove that 
$$
4^{n+1-k}A^{\Delta}(n+1,k).
$$
is integer for all $k\leqslant n+1$.
 
According to the formula (\ref{FormulaIteration1}), we have
$$
4^{n+1-k}A^{\Delta}(n+1,k)=
$$
$$
=4^{n+1-k}\frac{1}{2}\left(F^{\Delta}(n+1,k)-\sum\limits_{m=k+1}^{n} A^{\Delta}(n+1,m)A^{\Delta}(m,k)\right)=
$$

\begin{equation}\label{t41}
=\frac{1}{2}\left(4^{n+1-k}F^{\Delta}(n+1,k)-\sum\limits_{m=k+1}^{n} 4^{n+1-m}A^{\Delta}(n+1,m)4^{m-k}A^{\Delta}(m,k)\right).
\end{equation}

Since $
4^{n+1-m}F^{\Delta}(n+1,m)
$ is even and 
$
4^{k-m}A^{\Delta}(k,m)
$ is integer, we need to prove that the expression
$$
 4^{n+1-m}A^{\Delta}(n+1,m)
$$
is even for $m<n+1$. 

\begin{enumerate}
\item First case $m=n$. 
According to the formula (\ref{t41}), we have
$$
4A^{\Delta}(n+1,n)=4F^{\Delta}(n+1,n)\frac{1}{2}.
$$
Then $4^{n+1-m}A^{\Delta}(n+1,m)$ is even for $m=n$.

\item Next case $m=n-1$. 
According to the formula (\ref{t41}), we have
$$
4^{2}A^{\Delta}(n+1,n-1)=\frac{1}{2}\left(4^{2}F^{\Delta}(n+1,n-1)-4A^{\Delta}(n,n-1)4A^{\Delta}(n+1,n)\right).
$$
Since above conditions, the expression $4^{n+1-m}A^{\Delta}(n+1,m)$ is even for $m=n-1$.

\item The case $m=n-2$. 
According to the formula (\ref{t41}), we have
$$
4^{3}A^{\Delta}(n+1,n-2)=\frac{1}{2}\left(4^{3}F^{\Delta}(n+1,n-2)-4^2A^{\Delta}(n,n-2)4^1A^{\Delta}(n+1,n)+\right.
$$
$$
\left.+4^2A^{\Delta}(n,n-2)4^1A^{\Delta}(n+1,n)+4^1A^{\Delta}(n-1,n-2)4^2A^{\Delta}(n+1,n-1)\right).
$$

Since $4^{2}A^{\Delta}(n+1,n-1)$ is even, and other are known by the condition, the expression   $4^{n+1-m}A^{\Delta}(n+1,m)$ is even for $m=n-2$.
\end{enumerate}

Therefore, by induction we can obtain all values of  $A^{\Delta}(n+1,m)$ for $m<n+1$ which will be even.

The theorem is proved.
\end{proof}

\begin{Corollary}  
Suppose $F(x)=\sum_{n>0} f(n)x^n$ is a generating function with integer coefficients such that $f(1)=1$, $F^{\Delta}(n,k)$ is the composita of $F(x)$. Then the generating function $A(x)=\sum_{n>0}a(n)x^n$, which is obtained from the functional equation $A(A(x))=\frac{F(4x)}{4}$, has integer coefficients.
\end{Corollary}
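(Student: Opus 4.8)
The plan is to deduce the corollary from Theorem~\ref{integer} by a rescaling (conjugation) of the unknown series, so that no new computation is needed. First I would let $B(x)=\sum_{n>0}b(n)x^n$ denote the generating function obtained from the iterative functional equation $B(B(x))=F(x)$; this $B$ exists and its coefficients are computable by Theorem~\ref{TheoremItaration2}, with the linear coefficient fixed as $b(1)=\sqrt{f(1)}=1$ because $f(1)=1$. By Theorem~\ref{integer}, $4^{n-k}B^{\Delta}(n,k)$ is an integer for all $k\leqslant n$; specializing to $k=1$ and using the formula~(\ref{GenComp}), which gives $b(n)=B^{\Delta}(n,1)$, we conclude that $4^{n-1}b(n)$ is an integer for every $n$.

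Next I would check the conjugation identity $A(x)=\tfrac14 B(4x)$. Indeed,
$$
A\bigl(A(x)\bigr)=\tfrac14 B\bigl(4A(x)\bigr)=\tfrac14 B\bigl(B(4x)\bigr)=\tfrac14 F(4x),
$$
so $\tfrac14 B(4x)$ solves the functional equation of the corollary; since that equation also has $f(1)\cdot 4^{0}=1\neq0$ as the linear coefficient on its right-hand side, Theorem~\ref{TheoremItaration2} applies to it as well and pins down its solution by that linear coefficient, so the solution is precisely the $A(x)$ of the statement. Expanding, $A(x)=\tfrac14\sum_{n>0}b(n)4^{n}x^{n}=\sum_{n>0}4^{n-1}b(n)x^{n}$, whence $a(n)=4^{n-1}b(n)$. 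Combining with the previous paragraph, each $a(n)$ is an integer, which is the assertion.

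I do not expect a real obstacle in this argument: the substantive integrality is entirely inherited from Theorem~\ref{integer}, and the corollary merely repackages the bound $4^{n-1}B^{\Delta}(n,1)\in\mathbb{Z}$ after the substitution $x\mapsto4x$ together with the normalization $A\mapsto A/4$. The only points needing care are bookkeeping: ensuring that ``the generating function obtained from the functional equation'' refers unambiguously to the branch with linear coefficient $1$ (so that $A(x)=\tfrac14 B(4x)$ really is that series), and observing that $\tfrac14 F(4x)=\sum_{n>0}f(n)4^{n-1}x^{n}$ still has integer coefficients and linear coefficient $1$, so the ambient equation has the same shape as the one treated in Theorem~\ref{integer}. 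Spotting the scaling $A(x)=\tfrac14 B(4x)$ is the single idea the proof turns on.
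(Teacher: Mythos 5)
Your proof is correct and is essentially the paper's own argument: the paper performs the same rescaling at the level of compositae, substituting $A^{\Delta}(n,k)=4^{n-k}B^{\Delta}(n,k)$ into the recurrence (which is exactly your conjugation $A(x)=\tfrac14 B(4x)$ read off coefficientwise), and then invokes Theorem~\ref{integer}. Your phrasing at the generating-function level, together with the explicit check that $\tfrac14 B(4x)$ satisfies the equation and has linear coefficient $1$, is if anything a cleaner presentation of the same idea.
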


\begin{proof}

According to \cite{KruCompositae}, the composita of $\frac{F(4x)}{4}$ is equal to
$$4^{n-k}F^{\Delta}(n,k).$$

Next we consider the formula (\ref{FormulaIteration1}) 
$$
A^{\Delta}(n,k)=\left\{
\begin{array}{ll}
1, & n=k;\\
\frac{1}{2}\left(4^{n-k}F^{\Delta}(n,k)-\sum\limits_{m=k+1}^{n-1} A^{\Delta}(n,m)A^{\Delta}(m,k)\right),& n>k.\\
\end{array}
\right.
$$

Replacing $A^{\Delta}(n,k)$ by the composita $4^{n-k}B^{\Delta}(n,k)$, we get
$$
4^{n-k}B^{\Delta}(n,k)=\frac{1}{2}\left(4^{n-k}F^{\Delta}(n,k)-\sum\limits_{m=k+1}^{n-1} 4^{n-m}B^{\Delta}(n,m)4^{m-k}B^{\Delta}(m,k)\right).
$$

According to Theorem \ref{integer}, the expression $4^{n-k}B^{\Delta}(n,k)$ is integer.

Therefore, the generating function $A(x)$ such that satisfies to the equation $$A(A(x))=\frac{F(4x)}{4},$$ where $F(x)=\sum_{n>0} f(n)x^n$ is a generating function with integer coefficients and $f(1)=1$, has integer coefficients.

The corollary is proved.
\end{proof}

\begin{example}
Let us consider the iterative functional equation 
$$A\left(A\left(x\right)\right)={{1-\sqrt{1-16\,x}}\over{8}}.$$ 
Also the generating function $A(x)$ satisfies the following functional equation (for details, see \seqnum{A213422} in \cite{oeis})
$$
A(A(x)-4A(x)^2)=x, \qquad A(A(x)) = x + 4A(A(x))^2.
$$
Here $F(x)$ is the generating function for the Catalan numbers
$$
F(x)={{1-\sqrt{1-4\,x}}\over{2}}.
$$
According to \cite{KruCompositae}, the composita of $\frac{F(4x)}{4}={{1-\sqrt{1-16\,x}}\over{8}}$ is equal to
$$
4^{n-k}F^{\Delta}(n,k)=4^{n-k}{{2n-k-1}\choose{n-1}}\frac{k}{n}.
$$

Using Theorem \ref{TheoremItaration2}, we obtain following recurrent formula

$$ A^{\Delta}(n,k)=\left\{
\begin{array}{ll}
1, & n=k;\\
\frac{1}{2}\left({{4^{n-k}\,{{2\,n-k-1}\choose{n-1}}}\frac{k}{n}}-\sum\limits_{k=m-1}^{n-1} A^{\Delta}(n,k)A^{\Delta}(k,m)\right),& n>k.\\
\end{array}
\right.
$$ 
According to the above formula, we present the composita of $A(x)$ in tabular form
$$
\begin{array}{llll}
1,\\
2F^{\Delta}_{2,1}, &1 \\
8F^{\Delta}{3,1}-2F^{\Delta}_{2,1}A_{3,2},&2F^{\Delta}_{3,2},&1\\
(4F^{\Delta}_{2,1}F^{\Delta}_{3,2}-8F^{\Delta}_{3,1})F^{\Delta}_{4,3}-8F^{\Delta}_{2,1}F^{\Delta}_{4,2}+32F^{\Delta}_{4,1},& 8F^{\Delta}_{4,2}-2F^{\Delta}_{3,2}F^{\Delta}_{4,3},&2F^{\Delta}_{4,3},&1\\
\end{array}
$$
Therefore, using the formula (\ref{GenComp}), the expression for coefficients of $A(x)=\sum_{n>0}a(n)x^n$ is
$$ a(n)=A^{\Delta}(n,1).
$$ 
The first elements of the obtained generating function $A(x)$ are shown below
$$x+2x^2+12x^3+96x^4+880x^5+8720x^6+90752x^7+\cdots
$$
\end{example}

\section{Iterative functional equation $A^{2^n}(x)=F(x)$}
We start with the case $A(A(A(A(x))))=F(x)$.
Let us get the following iterative functional equation 
$$
B(B(x))=F(x),
$$
where $B(x)=A(A(x)).$

Using Theorem \ref{TheoremItaration2}, we obtain the composita of $B(x)$
$$
B^{\Delta}(n,k)=\left\{
\begin{array}{ll}
{f(1)}^{\frac{n}{2}}, & n=k;\\
\frac{1}{{f(1)}^{n\over 2}+{f(1)}^{k\over 2}}\left(F^{\Delta}(n,k)-\sum\limits_{m=k+1}^{n-1} B^{\Delta}(n,m)B^{\Delta}(m,k)\right),& n>k.\\
\end{array}
\right.
$$

Then using the formula (\ref{FormulaIteration}) again for the equation $B(x)=A(A(x))$, we obtain the composita $A^{\Delta}(n,k)$.

Let us consider this approach in the following example.
\begin{example}
Suppose we have the following iterative functional equation 
$$
A(A(A(A(x))))=x+16x^2.
$$
According to \cite{KruCompositae}, the composita of $F(x)=x+16x^2$ is equal to
$$
F^{\Delta}(n,k)={k \choose n-k}16^{n-k}.
$$
For the generating function $B(x)$ such that $B(B(x))=x+16x^2$ the composita is equal to
$$
B^{\Delta}(n,k)=\left\{
\begin{array}{ll}
1, & n=k;\\
\frac{1}{2}\left[{{{{k}\choose{n-k}}\,16^{n-k}-\sum\limits_{i=k+1}^{n-1}{B^{\Delta}\left(i
  , k\right)\,B^{\Delta}\left(n , i\right)}}}\right], & n>k.\\
   \end{array}
   \right.
$$
Since $B(x)=A(A(x))$, we obtain the composita of $A(x)$
$$
A^{\Delta}(n,k)=\left\{
\begin{array}{ll}
1, & n=k;\\
\frac{1}{2}\left[{{B^{\Delta}(n,k)-\sum\limits_{i=k+1}^{n-1}{A^{\Delta}\left(i
  , k\right)\,A^{\Delta}\left(n , i\right)}}}\right], & n>k.\\
   \end{array}
   \right.
$$
Therefore, using the formula (\ref{GenComp}), the expression for coefficients of $A(x)=\sum_{n>0}a(n)x^n$ is
$$ a(n)=A^{\Delta}(n,1).
$$ 
The first elements of the obtained generating function $A(x)$ are shown below
$$x+4x^2-48x^3+960x^4-23296x^5+616448x^6-16830464x^7+\cdots
$$
For details, see \seqnum{A141119} in \cite{oeis}.
\end{example}

Generalize the above proposed  approach for the iterative functional equation $A^{2^n}(x)=F(x)$.
\begin{enumerate}
\item Make substitution $B(x)=A^{(2^{n-1})}(x)$ and solve $B(B(x))=F(x)$. Then we obtain $B^{\Delta}(n,k)$.
\item Next make substitution $C(x)=B^{(2^{n-2})}(x)$ and solve $C(C(x))=B(x)$ with respect to $C(x)$. Then we obtain $C^{\Delta}(n,k)$.
\item Making the analogic substitution until $n=1$, we obtain the desired solution of $A(x)$.
\end{enumerate}

\end{document}